 \newtheorem{theorem}{Theorem}[section]
 \newtheorem{lemma}[theorem]{Lemma}
 \newtheorem{proposition}[theorem]{Proposition}
 \theoremstyle{definition}
 \newtheorem{definition}[theorem]{Definition}
 \theoremstyle{remark}
 \newtheorem{remark}[theorem]{Remark}
 \newtheorem*{ex}{Example}
 \numberwithin{equation}{section}
\begin{document}

%-------------------------------------------------------------------------
% editorial commands: to be inserted by the editorial office
%
%\firstpage{1} \volume{228} \Copyrightyear{2004} \DOI{003-0001}
%
%
%\seriesextra{Just an add-on}
%\seriesextraline{This is the Concrete Title of this Book\br H.E. R and S.T.C. W, Eds.}
%
% for journals:
%
%\firstpage{1}
%\issuenumber{1}
%\Volumeandyear{1 (2004)}
%\Copyrightyear{2004}
%\DOI{003-xxxx-y}
%\Signet
%\commby{inhouse}
%\submitted{March 14, 2003}
%\received{March 16, 2000}
%\revised{June 1, 2000}
%\accepted{July 22, 2000}
%
%
%
%---------------------------------------------------------------------------
%Insert here the title, affiliations and abstract:
%

%\title[Devaney Chaos on a Set-valued Map and Its Inverse Limit]
% {Devaney Chaos on a Set-valued Map and Its Inverse Limit}

\begin{center}
	{\Huge \noindent \textbf{Devaney Chaos on a Set-valued Map and Its Inverse Limit}}
	
	\vspace{.3cm}
	
	%Jie Jiang$^{a}$ \quad Lidong Wang$^{b, a}$\footnote{Corresponding author.
		%E-mail address: wld0707@126.com} \quad Yingcui Zhao$^{c}$

	YINGCUI ZHAO$^{1}$ \quad LIDING WANG$^{2}$\quad NAN WANG$^{3}$%\footnote{Corresponding author.
		%E-mail address: wld0707@126.com} 
	
\emph{1 School of Computing, Dongguan University of Technology, Dongguan 523808, Guangdong Province, P. R. China.}

\emph{{2 School of Statistics and Data Science, Zhuhai College of Science and Technology, Zhuhai 519041, Guangdong Province, P. R. China.}}

\emph{{3 School of Mathematics, Jilin Universit, Changchun 13001, Jilin Province, P. R. China.}}

(email:\textcolor{blue}{1 zycchaos@126.com, 2 Corresponding author: wld0707@126.com, 3 wangnanchao@126.com})

\end{center}
	\hspace*{\fill} \\
{\flushleft \emph{Abstract.}\quad We study relationships between a set-valued map and its inverse limits about the notion of periodic point set, transitivity, sensitivity and Devaney chaos. Density of periodic point set of a set-valued map and its inverse limits implies each other. Sensitivity of a set-valued map and its inverse limits does not imply each other. Transitivity and Devaney chaos of generalized inverse limits implies the corresponding property of a set-valued map.}

\begin{flushleft}
	Keywords. Transitivity, Sensitivity, Devaney chaos, Set-valued map, Generalized inverse limits.
	
	2020 Mathematics Subject Classification: 54F17, 54E45(Primary), 37C25, 54F65(Secondary)
\end{flushleft}

%----------Author 1
%\author[Y.C. Zhao]{Yingcui Zhao}
%
%\address{%
%School of Computing\\
%Dongguan University of Technology\\
%No.1 Daxue Road\\
%Dongguan City\\
%Guangdong Province\\
%China}
%\email{zycchaos@126.com}
%
%
%%----------Author 2
%\author[L. D. Wang]{Lidong Wang}\thanks{Corresponding author: Lidong Wang}
%\address{School of Statistics and Data Science \br
%Zhuhai College of Science and Technology\br
%No.8 Anji East Road\br
%Zhuhai City\br
%Guangdong Province\br
%China}
%
%%School of Mathematics}, \orgname{Jilin University}, \orgaddress{\street{No.2699 Qianjin Street
%%\email{wld0707@126.com}
%\author[N. Wang]{Nan Wang}
%\address{School of Mathematics\br
%Jilin University\br
%No.2699 Qianjin Street\br
%Changchun City\br
%Jilin Province\br 
%China}
%\email{wangnanchaos@126.com}
%----------classification, keywords, date
\subjclass{Primary 54F17, 54E45; Secondary 37C25, 54F65}

%\keywords{Transitivity, sensitivity,  Devaney chaos, set-valued map, generalized inverse limits}

%\date{January 1, 2004}
%----------additions
%\dedicatory{To my boss}
%%% ----------------------------------------------------------------------

\begin{abstract}
We study relationships between a set-valued map and its inverse limits about the notion of periodic point set, transitivity, sensitivity and Devaney chaos. Density of periodic point set of a set-valued map and its inverse limits implies each other. Sensitivity of a set-valued map and its inverse limits does not imply each other. Transitivity and Devaney chaos of generalized inverse limits implies the corresponding property of a set-valued map.
\end{abstract}

%%% ----------------------------------------------------------------------
%\maketitle
%%% ----------------------------------------------------------------------
%\tableofcontents
\section{Introduction}\label{sec1}
A dynamical system $(X,f)$ is closely connected to the dynamical system of its inverse limit $(\underleftarrow{\lim}(X,f), \sigma_f)$. Chaos appears naturally from hyperbolicity conditions in smooth dynamics. And inverse limits are a useful tool to study the dynamical properties of smooth systems (see \cite{Williams1967}). What's more, some dynamical properties of $f$ can be interpreted as the topological structures of an inverse limits dynamical system. For discussions on the relationship between $(X,f)$ and the inverse limit dynamical system $(\underleftarrow{\lim}(X,f), \sigma_f)$ on various shadowing properties, mixing and chaos, see \cite{Gu2006,Lee2016,Liu2013}. In addition, L. Block \cite{Block1975} proved $\overline{P(\sigma_f)}=\underleftarrow{\lim}(\overline{P(f)},f)$, in which $P(f)$ is the periodic points set of $f$.
He and Liu \cite{He1999} proved that $(X, f)$ is transitive if and only if $(\underleftarrow{\lim}(X,f), \sigma_f)$ is transitive. Wu et. al. \cite{Wu2014} showed if $f$ is surjective, then $(X, f)$ is sensitive (respectively, Devaney chaotic) if and only if $(\underleftarrow{\lim}(X,f), \sigma_f)$ is sensitive (respectively, Devaney chaotic) .

In 2004, Mahavier\cite{Mahavier2004} introduced the concept of generalized inverse limits, or inverse limits with set-valued functions. This subject provides an new way to research multi-valued functions. What's more, this way does not lose any information after iteration. Actually generalized inverse limits may make it available that understand deeply the resulting topology and dynamics of the example and how the topology and dynamics are interacting.

Recently, the topic of generalized inverse limits is a greatly researched field of continuum theory \cite{Davies2021, Charatonik2021}. While most of the investigation has been on comprehending the topological structure of these spaces, some scholars have recently turned to studying the dynamical properties, for example specification property \cite{Raines2018}, topological entropy \cite{Erceg2018, Cordeiro2015}, chaos \cite{Judychaos}, etc.% It is worth mentioning that  Raines \cite{Raines2018} researched specification property on a set-valued map and its inverse limit. 

Inspired by the above works on the dynamical properties of inverse limit dynamical systems, this paper studies chaotic properties via periodic points set, transitivity, sensitivity and Devaney chaos for the inverse limits of set-valued map. The specific layout of the present paper is
as follows. Some preliminaries and definitions are
introduced in the next section. Then we study the relationship between a set-valued map and its inverse limit about periodic points set and transitivity in Section 3, strong sensitivity and Devaney chaos in Section 4 and sensitivity and Devaney chaos in Section 5.
\section{Preliminaries}\label{sec2}
In this paper, we always suppose that $X$ is a compact metric space with a metric $d$ and $f:X\rightarrow X$ be a continuous map. Let $\mathbb{N}=\{0,1,2,\cdots\}$ and $\mathbb{Z^+}=\{1,2,3,\cdots\}$.
%A sequence $(x_i)_{i=0}^\infty$ is called an orbit of $f$ if for each $i\in \mathbb{N}$ we have $x_{i+1}=f(x_i)$.
we say that $f$ is
\begin{enumerate}
	
	\item[(1)] \emph{(topologically) transitive}, if for any nonempty open sets $U,V\subset X$, there exists $n\in\mathbb{Z^+}$ such that $f^n(U)\bigcap V\neq\emptyset$.
	
	\item[(2)] \emph{exact}, if for any nonempty open set $U\subset X$, there exists $n\in\mathbb{Z^+}$ such that $f^n(U)=X$.
\end{enumerate}
The point $x\in X$ is said to be a \emph{periodic point} if there exists $n\in\mathbb{Z^+}$ such that $f^n(x)=x$. The \emph{period of $x$} is the smallest number $n\in\mathbb{Z^+}$ satisfying $f^n(x)=x$. We denote the set of all periodic points for $f$ by $P(f)$. For any $x\in X$ and any $\varepsilon>0$, let $B(x,\varepsilon)=\{y\in X:d(x,y)<\varepsilon\}$. We say that $f$ is \emph{sensitive}, if there exists $\delta>0$ such that for any $x\in X$ and any $\varepsilon>0$, there is $y\in B(x,\varepsilon)$ such that $d(f^n(x),f^n(y))>\delta$. $f$ is \emph{chaotic in the sense of Devaney (Devaney chaotic)} \cite{Devaney1989}, if it satisfies the following three conditions:

\begin{enumerate}
	\item[(1)] $f$ is transitive.
	
	\item[(2)] $\overline{P(f)}=X$, i.e., the set of all periodic points of $f$ is dense in $X$.
	
	\item[(3)] $f$ is sensitive.
\end{enumerate}
Note that Banks et al. \cite{Bank1992} proved that sensitivity follows from the other two conditions.

In this paper we consider the dynamics of a set-valued map and its inverse limit space. Let $2^X$ be the hyperspace of nonempty closed subsets of $X$ with Hausdorff metric.

A map $F:X\rightarrow 2^X$ is said to be \emph{upper semi-continuous at a point $x\in X$}, if for every open set $V\subset X$ containing $F(x)$, there exists an open set $U\subset X$ containing $x$ such that $F(t)\subset V$ for any $t\in U$. %$F$ is said to be \emph{upper semi-continuous} if it is upper semi-continuous at each point of $X$.
A map $F:X\rightarrow 2^X$ is said to be \emph{lower semi-continuous at a point $x\in X$}, if for any $y\in F(x)$ and every open set $V\subset X$ containing $y$, there exists an open set $U\subset X$ containing $x$ such that $F(t)\bigcap V\neq\emptyset$ for any $t\in U$. $F:X\rightarrow 2^X$ is said to be \emph{upper semi-continuous}(\emph{lower semi-continuous}), if it is upper semi-continuous(lower semi-continuous) at each point of $X$.

% A map $F:X\rightarrow 2^X$ is said to be \emph{continuous at a point $x\in X$} if, it is both upper semi-continuous and lower semi-continuous at $x$. $F:X\rightarrow 2^X$ is said to be \emph{continuous} if it is both upper semi-continuous and lower semi-continuous at each point of $X$. 
 
Throughout this paper, let $F:X\rightarrow 2^X$ be a upper semi-continuous function, and we call $F$ \emph{a set-valued map} and $(X, F)$ \emph{a set-valued dynamical system}. We define $F^k:X\rightarrow 2^X$, $k\in\mathbb{Z}^+$ as follows: for any $x\in X$ and any $k\in\mathbb{Z}^+$, $F^k(x)=\bigcup_{y\in F^{k-1}(x)}F(y)$.
\begin{definition}\label{defin1}
%	Let $F:X\rightarrow 2^X$ be a set-valued map. Then
	 $F^{-1}:X\rightarrow 2^X$ is a set-valued map on $X$ and $F^{-1}(x)=\{y\in X:x\in F(y)\}$.
\end{definition}
\begin{proposition}\label{prop1}\cite{Li1998}
	The following statements are equivalent:
	\begin{enumerate}
		\item[(1)] $F$ is lower semi-continuous.
		
		\item[(2)] For every open subset $U\subset X$, $F^{-1}(U)$ is also an open subset in $X$.
	\end{enumerate}
\end{proposition}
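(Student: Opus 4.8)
The plan is to first pin down the meaning of $F^{-1}(U)$ for a subset $U\subseteq X$: extending Definition~\ref{defin1} setwise, $F^{-1}(U)=\bigcup_{x\in U}F^{-1}(x)=\{y\in X: F(y)\cap U\neq\emptyset\}$. Once this reformulation is in place, the proposition is a direct unwinding of the definition of lower semi-continuity, so I would prove the two implications separately and keep careful track of the quantifiers.

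For $(1)\Rightarrow(2)$, assume $F$ is lower semi-continuous and let $U\subseteq X$ be open; I would show that every point of $F^{-1}(U)$ is interior to it. If $x\in F^{-1}(U)$, then $F(x)\cap U\neq\emptyset$, so fix some $y\in F(x)\cap U$. Applying lower semi-continuity at $x$ to the point $y\in F(x)$ together with the open neighbourhood $U$ of $y$ yields an open set $W$ with $x\in W$ and $F(t)\cap U\neq\emptyset$ for every $t\in W$; that is, $W\subseteq F^{-1}(U)$. Hence $x$ is an interior point, and since $x\in F^{-1}(U)$ was arbitrary, $F^{-1}(U)$ is open.

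For $(2)\Rightarrow(1)$, assume $F^{-1}(U)$ is open for every open $U\subseteq X$, and fix $x\in X$, a point $y\in F(x)$, and an open set $V$ with $y\in V$. Since $y\in F(x)\cap V$, we have $x\in F^{-1}(V)$, which is open by hypothesis, so there is an open set $U$ with $x\in U\subseteq F^{-1}(V)$. Unwinding the membership, $F(t)\cap V\neq\emptyset$ for every $t\in U$, which is exactly the defining condition for lower semi-continuity of $F$ at $x$. As $x$ was arbitrary, $F$ is lower semi-continuous.

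There is essentially no hard step here; the only point requiring care is the bookkeeping between the pointwise formulation of $F^{-1}$ in Definition~\ref{defin1} and its setwise extension, together with reconciling the existential quantifier "$\exists\,y\in F(x)\cap U$" implicit in $x\in F^{-1}(U)$ with the universal quantifier "$\forall\,y\in F(x)$" in the definition of lower semi-continuity. In the forward direction one needs only a single witness $y$, which is precisely what membership in $F^{-1}(U)$ provides, and in the backward direction the neighbourhood $U$ of $x$ works uniformly for the given $y$, which is all the definition demands.
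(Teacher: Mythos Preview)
Your argument is correct and is the standard proof of this equivalence. The paper does not supply its own proof of this proposition; it is quoted from \cite{Li1998}, so there is nothing to compare against beyond noting that your write-up matches the expected direct unwinding of the definitions.
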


The inverse limit space induced by $F$ is the space $$\underleftarrow{\lim} F=\{(x_0, x_1,\cdots)\in X^{\mathbb{N}}: x_{i}\in F(x_{i+1})\},$$ considered as a subspace of the Tychonoff product $X^{\mathbb{N}}$. We denote $(x_i)_{i=0}^\infty\in X^{\mathbb{N}}(x_i\in F(x_{i+1}), \forall i\geq 0)$ by $\overrightarrow{x}$, and $(x_i)_{i=0}^\infty\in X^{\mathbb{N}}(x_{i+1}\in F(x_{i}), \forall i\geq 0)$ by $\overleftarrow{x}$. Associated with the inverse limit spaces is a shift map $$\sigma(x_0,x_1,\cdots)=(x_1,x_2,\cdots).$$
Suppose that the diameter of $X$ is equal to $1$. For each $n\in\mathbb{N}$, we define a metric $\rho$ on $X^{\mathbb{N}}$ by $$\rho(x,y)=\sum_{i=0}^\infty\frac{d(x_i,y_i)}{2^i}.$$

We begin with a few simple extensions of definitions from the single-valued case. Note that orbits of $F$ are no longer uniquely determined by their initial condition.
\begin{definition}\label{defin2}\cite{Raines2018}%\cite{Wu2018}
%	Let $F:X\rightarrow 2^X$. 
	An \emph{orbit} of a point $x\in X$ for $F$ is a sequence $\overleftarrow{x}$ such that $x_{i+1}\in F(x_i)$ and $x_0=x$.
\end{definition}
\section{Periodic points and transitivity}
\begin{definition}\label{defin4}\cite{Raines2018, Ansari2010}
Let $x\in X$ and $\overleftarrow{x}$ be an orbit of $x$. The orbit is said to be \emph{a periodic orbit}, if there exists $m\in\mathbb{Z^+}$ such that $x_i=x_{i+m}$ for any $i\geq 0$. The point $x$ is \emph{a periodic point}, if it has at least one periodic orbit. \emph{The period of $x$} is the smallest number $m\in\mathbb{Z^+}$ satisfying $x_i=x_{i+m}$, $\forall i\geq 0$. If $m=1$, then $x$ is said to be \emph{a fixed point}. We denote the set of all periodic points of $F$ by $P(F)$.
\end{definition}

By Definition \ref{defin2} and Definition \ref{defin4}, it is easy to see the orbits of $x\in X$ for $F:X\rightarrow 2^X$  no longer uniquely determined. And the orbit $\overleftarrow{x}$ is not necessarily periodic even if there exists $j\in\mathbb{N}$ such that $x=x_0=x_j$. Then we get the following lemma.
\begin{lemma}\label{lem1}
	$P(\sigma\mid_{\underleftarrow{\lim} F})=\underleftarrow{\lim} F\mid_{P(F)},$ where $F\mid_{P(F)}: P(F)\rightarrow 2^{P(F)}$ is defined as: $F\mid_{P(F)}(x)=F(x)\bigcap P(F), \forall x\in X.$
\end{lemma}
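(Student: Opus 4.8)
The plan is to prove the two inclusions separately by unwinding the definitions of a periodic point of $\sigma$ restricted to $\underleftarrow{\lim}F$ and of a periodic point of $F$ together with the description of $\underleftarrow{\lim}F\mid_{P(F)}$. Recall that a point $\overrightarrow{x}=(x_0,x_1,\dots)\in\underleftarrow{\lim}F$ satisfies $x_i\in F(x_{i+1})$ for all $i\ge 0$, and it lies in $P(\sigma\mid_{\underleftarrow{\lim}F})$ iff there is $m\in\mathbb{Z}^+$ with $\sigma^m(\overrightarrow{x})=\overrightarrow{x}$, i.e.\ $x_{i}=x_{i+m}$ for all $i\ge 0$.

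For the inclusion $P(\sigma\mid_{\underleftarrow{\lim}F})\subseteq\underleftarrow{\lim}F\mid_{P(F)}$: let $\overrightarrow{x}$ be $\sigma$-periodic of period $m$. First I would check that each coordinate $x_i$ is a periodic point of $F$ in the sense of Definition \ref{defin4}. The natural candidate periodic orbit of $x_i$ is the sequence obtained by reading the tail of $\overrightarrow{x}$ starting at $x_i$ in reverse, i.e.\ $x_i, x_{i-1}, \dots, x_0, x_{i+m-1}, x_{i+m-2},\dots$ — more cleanly, since $x_{j}=x_{j+m}$ for all $j$, the sequence $\overleftarrow{y}$ with $y_k = x_{(i-k)\bmod m}$ (indices read mod $m$ among the first $m$ coordinates, which is well-defined by periodicity) satisfies $y_{k+1}\in F(y_k)$ because $x_{(i-k-1)}\in F(x_{(i-k)})$ from the inverse-limit condition, and it is periodic with period dividing $m$ and has $y_0=x_i$. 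Hence $x_i\in P(F)$ for every $i$. Then $x_i\in F(x_{i+1})$ together with $x_i,x_{i+1}\in P(F)$ gives $x_i\in F(x_{i+1})\cap P(F)=F\mid_{P(F)}(x_{i+1})$, so $\overrightarrow{x}\in\underleftarrow{\lim}F\mid_{P(F)}$.

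For the reverse inclusion $\underleftarrow{\lim}F\mid_{P(F)}\subseteq P(\sigma\mid_{\underleftarrow{\lim}F})$: let $\overrightarrow{x}\in\underleftarrow{\lim}F\mid_{P(F)}$, so $x_i\in P(F)$ for all $i$ and $x_i\in F(x_{i+1})$. I must produce an $m$ with $x_i=x_{i+m}$ for all $i$; equivalently I must show the bi-infinite-looking sequence $\overrightarrow{x}$ is actually eventually (in fact everywhere) periodic to the left. Here is the subtle point and where I expect the main obstacle: knowing each individual $x_i$ has \emph{some} periodic orbit is not the same as knowing the specific orbit recorded inside $\overrightarrow{x}$ is periodic — the remark preceding the lemma explicitly warns that an orbit with $x_0=x_j$ need not be periodic. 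So I would argue as follows: since $x_0\in P(F)$ pick a periodic orbit and let $N$ be its period; the key is to show that the string $(x_0,\dots)$ reading leftward, i.e.\ the reversed sequence, must be forced to cycle. Actually the clean route is: every coordinate $x_i$ of a point of $\underleftarrow{\lim}F$ determines, via $x_{i}\in F(x_{i+1})$, that $x_{i+1}\in F^{-1}(x_i)$; one shows that the orbit segment $(\dots,x_2,x_1,x_0)$ read right-to-left is an orbit of $x_0$ under $F$, and combined with membership of all $x_i$ in $P(F)$ and compactness/finiteness arguments on the periods, a common period $m$ can be extracted. I expect the heart of the proof — and the place requiring genuine care rather than bookkeeping — is establishing that $\overrightarrow{x}$ is genuinely $\sigma$-periodic rather than merely having all coordinates periodic, and I would handle it by exhibiting $m$ explicitly as (a multiple of) the period of a chosen periodic orbit of $x_0$, then verifying $x_{i}=x_{i+m}$ for every $i$ by induction using the inverse-limit relations and the defining property of that orbit.
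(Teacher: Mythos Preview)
The paper states this lemma without proof, so there is no argument of the paper's to compare against. Your treatment of the inclusion $P(\sigma\mid_{\underleftarrow{\lim}F})\subseteq\underleftarrow{\lim}F\mid_{P(F)}$ is correct: extending a $\sigma$-periodic sequence $m$-periodically to negative indices and reading it backwards from any coordinate produces a periodic $F$-orbit of that coordinate, so every coordinate lies in $P(F)$.

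The reverse inclusion, however, is \emph{false} as stated, so no argument can close the gap you identified. Take the paper's own example following Theorem~\ref{thmtran}: $X=\{0,1\}$, $F(0)=\{0,1\}$, $F(1)=\{0\}$. Here $0$ is fixed (since $0\in F(0)$) and $1$ has the periodic orbit $1,0,1,0,\dots$ (since $0\in F(1)$ and $1\in F(0)$), so $P(F)=\{0,1\}=X$ and therefore $\underleftarrow{\lim}F\mid_{P(F)}=\underleftarrow{\lim}F$. But the sequence $(0,1,0,0,0,\dots)$ lies in $\underleftarrow{\lim}F$ (one checks $0\in F(1)$, $1\in F(0)$, $0\in F(0)$, \dots) and is plainly not $\sigma$-periodic. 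Your instinct that this direction was ``the place requiring genuine care'' was exactly right; the proposed induction on a chosen period $m$ of $x_0$ cannot work because the relation $x_i\in F(x_{i+1})$ does not constrain which preimage $x_{i+1}$ is selected, so the sequence need not repeat even when every individual coordinate is a periodic point. At best one has the one-sided inclusion $P(\sigma\mid_{\underleftarrow{\lim}F})\subseteq\underleftarrow{\lim}F\mid_{P(F)}$, and hence equality of closures only under additional hypotheses---which is closer in spirit to Block's classical result cited in the introduction.
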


Let $F(U)=\bigcup_{x\in U}F(x)$, for any subset $U\subset X$. %Then we can get the following theorem.
\begin{theorem}\label{thmpero}%Let $(X, F)$ be a set-valued dynamical system.
	\begin{enumerate}
		\item[(1)] $\overline{P(F)}=X$ implies $\overline{P(\sigma\mid_{\underleftarrow{\lim} F})}=\underleftarrow{\lim} F$.
		
		\item[(2)] Let $F(X)=X$. Then $\overline{P(\sigma\mid_{\underleftarrow{\lim} F})}=\underleftarrow{\lim} F$ implies $\overline{P(F)}=X$.
	\end{enumerate}
\end{theorem}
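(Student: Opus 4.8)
The plan is to derive both parts from Lemma~\ref{lem1}, which identifies $P(\sigma\mid_{\underleftarrow{\lim} F})$ with the inverse limit of $F\mid_{P(F)}$, together with elementary facts about the coordinate projections $\pi_n\colon\underleftarrow{\lim} F\to X$. Write $\sigma=\sigma\mid_{\underleftarrow{\lim} F}$ for short. Two preliminary remarks are used throughout. First, the set $\Gamma=\{(x,y)\in X\times X:y\in F(x)\}$ is closed (upper semicontinuity, closed values, compactness of $X$), so $F^{-1}(x)=\{y:x\in F(y)\}$ is closed, hence compact, for every $x$. Second, unwinding the definition of $\sigma$ shows that $\overrightarrow{x}\in P(\sigma)$ has period $m$ precisely when $x_{i+m}=x_i$ and $x_i\in F(x_{i+1})$ for all $i$; then $(x_0,x_{m-1},x_{m-2},\dots,x_1,x_0,\dots)$ is a periodic orbit of $x_0$ for $F$, so $\pi_k(P(\sigma))\subseteq P(F)$ for every $k$, and $P(\sigma)$ is exactly the set of reverses of periodic $F$-orbits.

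For part (2) the key step is that $\pi_0$ is onto. Given $x_0\in X$, the set $F^{-1}(x_0)$ is nonempty because $F(X)=X$, and inductively, once $x_1,\dots,x_n$ with $x_i\in F(x_{i+1})$ have been chosen, $F^{-1}(x_n)\neq\emptyset$ since $x_n\in X=F(X)$. The sets $Y_n=\{(x_1,\dots,x_n)\in X^n:x_0\in F(x_1)\ \hbox{and}\ x_k\in F(x_{k+1})\ \hbox{for}\ 1\le k<n\}$ are nonempty and compact, so their inverse limit under the truncation maps is nonempty; this yields $(x_1,x_2,\dots)$ with $(x_0,x_1,x_2,\dots)\in\underleftarrow{\lim} F$, hence $\pi_0(\underleftarrow{\lim} F)=X$. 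Since a continuous surjection pushes dense sets to dense sets, $\overline{P(\sigma)}=\underleftarrow{\lim} F$ gives $X=\pi_0(\underleftarrow{\lim} F)=\pi_0(\overline{P(\sigma)})\subseteq\overline{\pi_0(P(\sigma))}\subseteq\overline{P(F)}$, which is part (2). The hypothesis $F(X)=X$ enters here precisely as what makes $\pi_0$ surjective.

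For part (1) I would establish density of $P(\sigma)$ in $\underleftarrow{\lim} F$ directly. Fix $\overrightarrow{x}\in\underleftarrow{\lim} F$ and $\varepsilon>0$, and choose $N$ with $2^{-N}<\varepsilon/2$; it then suffices to produce $\overrightarrow{y}\in P(\sigma)$ with $d(x_i,y_i)<\varepsilon/4$ for all $i\le N$. The reversed initial string $x_N\to x_{N-1}\to\cdots\to x_0$ is an $F$-orbit segment, and by the second remark above it suffices to find a periodic $F$-orbit a window of which shadows this segment to within $\varepsilon/4$: reversing that orbit and repeating its cycle fills in the tail and delivers $\overrightarrow{y}$. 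Since $\overline{P(F)}=X$, one starts from a periodic point $p\in P(F)$ close to $x_N$, and must then extend: having a periodic point near $x_j$, one wants a point of $F$ of it that is near $x_{j-1}$ and again periodic, while arranging the finite block being built to close up into a genuine periodic orbit.

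I expect this extension to be the main obstacle. Upper semicontinuity controls the image sets $F(\cdot)$ only from above, so it does not by itself guarantee that near a periodic point of $x_j$ there lies a periodic point of $x_{j-1}$ inside the $F$-image of the first; moreover, because orbits of $F$ are not determined by their initial conditions, the periodic orbit through $p$ cannot simply be followed, as it is in the single-valued arguments of \cite{Block1975,Wu2014}. The natural remedy is to make the $N+1$ selections simultaneously rather than greedily --- picking compatible periodic points over $x_N,x_{N-1},\dots,x_0$ at once, with a uniform error bound supplied by equicontinuity of the first $N$ iterates --- and then to close the chosen block into a periodic orbit by a compactness argument on $\Gamma$. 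Carrying this out cleanly under upper semicontinuity alone, or pinning down exactly which additional regularity of $F$ it requires, is the crux of the argument.
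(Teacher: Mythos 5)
Your part (2) is correct and is in substance the paper's own argument: the hypothesis $F(X)=X$ enters exactly to make $\pi_0$ surjective (equivalently, to make $(U\times X\times X\times\cdots)\cap\underleftarrow{\lim} F$ nonempty for every nonempty open $U\subset X$), and density of $P(\sigma\mid_{\underleftarrow{\lim} F})$ is then pushed down through the projection; your observation that the zeroth coordinate of a periodic point of $\sigma$ is a periodic point of $F$ (reverse the cycle) is the half of Lemma~\ref{lem1} that this needs, so that part stands.

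Part (1) you leave open, and the obstacle you single out --- choosing periodic points over $x_N,\dots,x_0$ compatibly and closing the finite block into a genuine cycle --- is not a removable technicality: under the paper's standing hypotheses (upper semi-continuity alone) statement (1) is false, so no completion of your sketch exists. Take $X=\{a,b\}$ with $d(a,b)=1$ and $F(a)=\{a,b\}$, $F(b)=\{b\}$; this is upper semi-continuous with closed values, and both points are fixed, so $\overline{P(F)}=X$. The point $\overrightarrow{x}=(b,a,a,a,\dots)$ lies in $\underleftarrow{\lim} F$, but if $\overrightarrow{y}\in\underleftarrow{\lim} F$ is $\sigma$-periodic with period $m$ and some $y_j=b$, then $y_{j+m}=b$ forces $y_{j+m-1}=\cdots=y_j=b$ (because $F(b)=\{b\}$), i.e.\ $m+1$ consecutive coordinates equal $b$, hence by periodicity all coordinates equal $b$. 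Thus $P(\sigma\mid_{\underleftarrow{\lim} F})$ consists only of the two constant sequences, and $\overrightarrow{x}$ stays at distance at least $\frac{1}{2}$ from it, so the periodic points of $\sigma$ are not dense. The same example shows that Lemma~\ref{lem1}, on which the paper's proof of (1) rests, already fails: $(b,a,a,\dots)$ belongs to $\underleftarrow{\lim} F\mid_{P(F)}$ (here $P(F)=X$) but is not $\sigma$-periodic --- a backward chain of periodic points need not be a periodic backward chain. The paper's proof of (1) then commits precisely the error you were wary of: from $U_i\cap P(F)\neq\emptyset$ for each $i$ it infers a point of $\underleftarrow{\lim} F\mid_{P(F)}$ inside $U_0\times U_1\times\cdots$, which is exactly the simultaneous compatible selection that cannot in general be made. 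So your instinct at the "crux" was sound, but the correct conclusion is not that extra regularity of $F$ is needed to finish your argument; it is that (1) as stated is not a theorem, and any repair must add a hypothesis that lets finite backward chains of periodic points be closed into cycles, as happens automatically in the single-valued setting of \cite{Block1975}.
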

\begin{proof}
	\begin{enumerate}
		\item[(1)] Suppose that $\overline{P(F)}=X$. Then for any nonempty open set $\mathcal{U}=U_0\times U_1\times U_2\times\cdots\subset\underleftarrow{\lim} F$, $U_i\bigcap P(F)\neq\emptyset$, $i\geq 0$. Hence, $\mathcal{U}\bigcap\underleftarrow{\lim} F\mid_{P(F)}\neq\emptyset$. So, $\overline{\underleftarrow{\lim} F\mid_{P(F)}}=\underleftarrow{\lim} F.$ By Lemma \ref{lem1}, $\overline{P(\sigma\mid_{\underleftarrow{\lim} F})}=\underleftarrow{\lim} F$.
		
		\item[(2)] Suppose that $F(X)=X$ and $\overline{P(\sigma\mid_{\underleftarrow{\lim} F})}=\underleftarrow{\lim} F$. Let $U$ be a nonempty open set of $X$. By $X=F(X)$, $U\times X\times X\times\cdots$ is a nonempty open set of $\underleftarrow{\lim} F$. Then, $U\times X\times X\times\cdots\bigcap\underleftarrow{\lim} F\mid_{P(F)}\neq\emptyset$. Hence, $U\bigcap P(F)\neq\emptyset$. So, $\overline{P(F)}=X$.
	\end{enumerate}
\end{proof}
%\begin{thm}\label{thm2}
%Suppose $F(X)=X$. $\sigma$ is chaotic in the sense of Devaney on $\underleftarrow{\lim} F$ implies $F$ is $D_1$-chaotic on $X$. However, the converse is not true.
%\end{thm}
%\begin{proof}
%It is easily to know by the Theorem 4.1 of \cite{Zhao2020}, Example 4.3 of \cite{Zhao2020} and Theorem \ref{thm1}.
%\end{proof}
\begin{theorem}\label{thm3}%Let $(X, F)$ be a set-valued dynamical system. 
	$\overline{P(F)}=X$ if and only if $\overline{P(F^{-1})}=X$.
\end{theorem}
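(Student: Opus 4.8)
The plan is to prove the stronger equality $P(F)=P(F^{-1})$ as subsets of $X$; taking closures then gives $\overline{P(F)}=\overline{P(F^{-1})}$, so that one of these sets equals $X$ exactly when the other does. Two preliminary observations reduce this to a single inclusion. First, $F^{-1}(x)=\{y\in X:x\in F(y)\}$ is closed for each $x$, a routine consequence of the closedness of $F(y)$ and the upper semi-continuity of $F$, so that $F^{-1}$ and its periodic point set are well defined. Second, $(F^{-1})^{-1}=F$, since $\{y\in X:x\in F^{-1}(y)\}=\{y\in X:y\in F(x)\}=F(x)$. Hence it is enough to show $P(F)\subseteq P(F^{-1})$ and then run the same argument with $F$ replaced by $F^{-1}$.

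For the inclusion I would recast periodicity in cyclic terms. By Definitions \ref{defin2} and \ref{defin4}, $x\in P(F)$ if and only if there are $m\in\mathbb{Z}^+$ and a map $c:\mathbb{Z}/m\mathbb{Z}\to X$ with $c(0)=x$ and $c(k+1)\in F(c(k))$ for all $k$: such a $c$ unrolls to the periodic orbit $(c(0),c(1),c(2),\dots)$ with indices read mod $m$, and conversely a periodic orbit of period $m$ determines such a $c$. Given one, reverse it by setting $c'(k)=c(-k)$. Then $c'(0)=c(0)=x$, and for every $k$, applying $c(\ell+1)\in F(c(\ell))$ at $\ell=-k-1$ gives $c'(k)=c(-k)\in F(c(-k-1))=F(c'(k+1))$, i.e. $c'(k+1)\in F^{-1}(c'(k))$. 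So $c'$ is a periodic cycle for $F^{-1}$ through $x$, whence $x\in P(F^{-1})$ and $P(F)\subseteq P(F^{-1})$.

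Running this with $F^{-1}$ in place of $F$ and invoking $(F^{-1})^{-1}=F$ gives the reverse inclusion, hence $P(F)=P(F^{-1})$ and the theorem. I do not anticipate a serious obstacle; the one place to be careful is the index bookkeeping in the reversal — checking that the base point $c(0)=x$ is preserved, so that $x$ itself (and not merely some other point on its orbit) belongs to $P(F^{-1})$, and noting that the fixed-point case $m=1$ is included, where the claim collapses to the evident equivalence $x\in F(x)\iff x\in F^{-1}(x)$.
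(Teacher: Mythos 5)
Your proposal is correct and follows essentially the same route as the paper: both arguments reverse a periodic cycle of $F$ to obtain a periodic cycle of $F^{-1}$ through the same base point (the paper does this with explicit block-and-shift index bookkeeping, you with $\mathbb{Z}/m\mathbb{Z}$ indexing, and your reduction via $(F^{-1})^{-1}=F$ plays the role of the paper's ``the converse is similar''). In effect both proofs establish $P(F)=P(F^{-1})$ and then pass to closures.
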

\begin{proof}
	$\Rightarrow$: Let $U$ be a nonempty open set of $X$. By $\overline{P(F)}=X$, there exists $x\in U$ with a periodic orbit $\overleftarrow{x}$ for $F$ and the period $n$. Let $y_{kn}=x_{n-1}$, $y_{kn+1}=x_{n-2}$, $\cdots$, $y_{kn+n-2}=x_1$, $y_{kn+n-1}=x_0$, $\forall k\geq 0.$ Then $\overrightarrow{y}$ is a $n$-periodic orbit for $F^{-1}$. Let $z_i=y_{i+n-1}$, $\forall i\geq 0$. Then $\overrightarrow{z}$ is also a $n$-periodic orbit for $F^{-1}$ and $z_0=x_0=x$. Hence, $x\in P(F^{-1})$. So, $\overline{P(F^{-1})}=X$.
	
	$\Leftarrow$: It is similar.
\end{proof}
For any subset $A\subset X$, and any $\varepsilon>0$, let $B(A,\varepsilon)=\{x\in X: d(x, A)<\varepsilon\}$. Now we introduce the definition of transitivity for $F$.
\begin{definition}\label{defin3}
	The set-valued map $F$ is \emph{(topologically) transitive} if, for any non-empty open sets $U, V\subset X$, there is an $n\in\mathbb{Z}^+$ such that there is $x_0\in U$ with an orbit $\overleftarrow{x}$ satisfying $x_n\in V$.
\end{definition}
\begin{theorem}\label{thmtran}
	Let $F(X)=X$. If $\underleftarrow{\lim}$ is transitive via $\sigma$, then $F$ is transitive.
\end{theorem}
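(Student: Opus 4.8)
The plan is to manufacture an orbit of $F$ from a transitive point of $\sigma$ on $\underleftarrow{\lim} F$. The one thing that needs care is a clash of conventions: a point of $\underleftarrow{\lim} F$ is a sequence $\overrightarrow{x}$ with $x_i\in F(x_{i+1})$, whereas an orbit of $F$ (Definition \ref{defin2}) is a sequence $\overleftarrow{y}$ with $y_{i+1}\in F(y_i)$, so the two run in opposite directions, and a finite block of one, read backwards, is a finite block of the other. Since transitivity of $F$ (Definition \ref{defin3}) is demanded for every ordered pair of nonempty open sets, this reversal costs nothing: to verify the condition for a pair $(U,V)$ I will apply transitivity of $\sigma$ to a suitable pair of cylinders built from $V$ and $U$, in that order.

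First I set up the cylinders. For a nonempty open $W\subseteq X$ put $\langle W\rangle=(W\times X\times X\times\cdots)\cap\underleftarrow{\lim} F$. It is open in $\underleftarrow{\lim} F$ because $W\times X\times X\times\cdots$ is a basic open set of $X^{\mathbb{N}}$, and it is nonempty: choosing $w\in W$ and then, using $F(X)=X$ repeatedly, points $x_1,x_2,\dots$ with $w\in F(x_1)$, $x_1\in F(x_2),\dots$, one gets $(w,x_1,x_2,\dots)\in\langle W\rangle$. This is the same device already used in the proof of Theorem \ref{thmpero}(2), and it is the only place the hypothesis $F(X)=X$ enters.

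Now fix nonempty open $U,V\subseteq X$. Applying transitivity of $\sigma$ to the nonempty open sets $\langle V\rangle$ and $\langle U\rangle$ yields $n\in\mathbb{Z}^{+}$ and a point $\overrightarrow{x}=(x_0,x_1,\dots)\in\langle V\rangle$ with $\sigma^{n}(\overrightarrow{x})\in\langle U\rangle$; that is, $x_0\in V$, $x_n\in U$, and $x_i\in F(x_{i+1})$ for $0\le i\le n-1$. Put $y_j=x_{n-j}$ for $0\le j\le n$. Then $y_0=x_n\in U$, $y_n=x_0\in V$, and $y_{j+1}=x_{n-j-1}\in F(x_{n-j})=F(y_j)$ for $0\le j\le n-1$, so $(y_0,\dots,y_n)$ is a finite orbit segment of the point $y_0\in U$. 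Extend it to an infinite orbit by choosing $y_{j+1}\in F(y_j)$ for $j\ge n$, which is possible since every value $F(y_j)$ is nonempty. The resulting $\overleftarrow{y}$ is an orbit of $y_0\in U$ with $y_n\in V$, which is exactly the conclusion of Definition \ref{defin3} for the pair $(U,V)$; hence $F$ is transitive.

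The argument is short, and I expect no real obstacle beyond keeping the two index conventions $\overrightarrow{x}$ and $\overleftarrow{y}$ straight — that is precisely why the application of $\sigma$-transitivity must be made to the ordered pair $(\langle V\rangle,\langle U\rangle)$ rather than $(\langle U\rangle,\langle V\rangle)$. Everything else is routine verification.
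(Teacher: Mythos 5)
Your proof is correct and follows essentially the same route as the paper's: lift the two open sets to cylinder sets in $\underleftarrow{\lim} F$ (nonempty because $F(X)=X$), apply transitivity of $\sigma$, and reverse the resulting finite block $x_n,\dots,x_0$ to obtain an orbit segment for $F$. You are in fact a bit more careful than the printed proof on the minor points it glosses over — openness and nonemptiness of the cylinders, extending the finite segment to an infinite orbit, and applying $\sigma$-transitivity to the pair $(\langle V\rangle,\langle U\rangle)$ so the conclusion lands on the ordered pair $(U,V)$ directly.
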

\begin{proof}
	Let $U_0$, $V_0\subset X$ be two non-empty open sets. Then we can get two non-empty open sets in $\underleftarrow{\lim} F$: $\mathcal{U}=U_0\times U_1\times U_2\times\cdots$, $\mathcal{V}=V_0\times V_1\times V_2\times\cdots$ with $U_{i}\subset F(U_{i+1})$ and $V_{i}\subset F(V_{i+1})$, $\forall i\in\mathbb{N}$. Hence, there exists $n\in\mathbb{N}$ such that $\sigma^n(\mathcal{U})\cap\mathcal{V}\neq\emptyset.$ Then, there exists $(x_i)_{i=0}^\infty\in\mathcal{U}$ such that $(x_n,x_{n+1},\cdots)\in\mathcal{V}$, in which $x_0\in F^n(x_n)$. Set the $(y_i)_{i=0}^\infty$ for $F$ with $y_n=x_0$ and $y_0=x_n$. Then, $y_0\in V_0$ and $y_n\in U_0$. So, $F$ is transitive.
\end{proof}
The following example shows the converse of Theorem \ref{thmtran} isn't true.
\begin{ex}\label{exatran}
	Consider the map$F:\{0,1\}\rightarrow 2^{\{0,1\}}$ defined as $F(0)=\{0,1\}$ and $F(1)=\{0\}$. Then $\underleftarrow{\lim} F=\{(x_i)_{i=0}^\infty:x_i=0$ implies $x_{i+1}=0~or~1$ and $x_i=1$ implies $x_{i+1}=0$, $\forall i\in\mathbb{N}\}$. It is easy to see $F$ is transitive. Set $\mathcal{U}=\{0\}\times\{1\}\times\{0\}\times\{1\}\times\cdots$, $\mathcal{V}=\{0\}\times\{0\}\times\{0\}\times\cdots$. Then, both $\mathcal{U}$ and $\mathcal{V}$ are non-empty open sets in $\underleftarrow{\lim} F$. However, for any $n\in\mathbb{N}$, $\sigma^n(\mathcal{U})\cap\mathcal{V}=\emptyset$. So, $\underleftarrow{\lim} F$ is not transitive via $\sigma$.
\end{ex}
\section{Strong sensitivity and Devaney chaos}
In this section, we will introduce the first kind of sensitivity for $F$, which is named strong sensitivity. And the corresponding strong Devaney chaos for $F$ will be given.
\begin{definition}\label{defin6}
	Let $(X, F)$ be a set-valued dynamical system. We say $F$ is \emph{strongly sensitive} if, there exists $\delta>0$ (strongly sensitive constant) such that for any $x\in X$ and any $\varepsilon>0$, there is $y\in B(x,\varepsilon)$ with an orbit $\overleftarrow{y}$ and $n\in\mathbb{Z^+}$ satisfying $$\inf_{x_n\in F^n(x)}d(x_n,y_n)>\delta.$$
\end{definition}
\begin{definition}
	Let $(X, F)$ be a set-valued dynamical system. We say $F$ is \emph{strongly Devaney chaotic}, if
	\begin{enumerate}
		\item[(1)] $F$ is transitive,
		
		\item[(2)] the periodic points set of $F$ is dense in $X$, i.e., $\overline{P(F)}=X$,
		
		\item[(3)] $F$ is strongly sensitive.
	\end{enumerate}
\end{definition}

\begin{proposition}\label{prop2}
	Let $(X, F)$ be a set-valued dynamical system. If $F(x)=X$ for some $x\in X$, then $F$ is not strongly sensitive.
\end{proposition}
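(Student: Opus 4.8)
The plan is to argue by contradiction. Suppose $F$ is strongly sensitive with strong sensitive constant $\delta>0$, and let $x_0\in X$ be a point with $F(x_0)=X$. I will derive a contradiction by evaluating the strong sensitivity condition at the specific center point $x=x_0$.

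The first step is to establish that $F^n(x_0)=X$ for every $n\in\mathbb{Z}^+$. Since $x_0\in F(x_0)=X$, an easy induction using $F^{k+1}(x_0)=\bigcup_{t\in F^k(x_0)}F(t)$ shows $x_0\in F^k(x_0)$ for all $k\geq 1$, hence $F^{k+1}(x_0)\supseteq F(x_0)=X$ and therefore $F^k(x_0)=X$ for all $k\geq 1$. (Equivalently one may first note that $F(x_0)=X$ forces $F(X)=X$, since $X=F(x_0)\subseteq F(X)\subseteq X$, and then $F^n(x_0)=X$ follows at once.)

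Next I apply Definition \ref{defin6} at $x=x_0$ with, say, $\varepsilon=1$: there must exist $y\in B(x_0,1)$ with an orbit $\overleftarrow{y}=(y_0,y_1,y_2,\dots)$ and some $n\in\mathbb{Z}^+$ satisfying $\inf_{x_n\in F^n(x_0)}d(x_n,y_n)>\delta$. But $y_n\in F^n(y)\subseteq X=F^n(x_0)$, so $y_n$ is itself an admissible choice of $x_n$ in this infimum, which gives $\inf_{x_n\in F^n(x_0)}d(x_n,y_n)\leq d(y_n,y_n)=0$, contradicting $\inf_{x_n\in F^n(x_0)}d(x_n,y_n)>\delta>0$. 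Hence no strong sensitive constant can exist and $F$ is not strongly sensitive.

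The argument is short, and the only place that needs a moment's care is the induction $F^n(x_0)=X$, which is exactly where the hypothesis $F(x_0)=X$ is genuinely used: the condition on $x_0$ propagates to all iterates precisely because $x_0$ lies in its own image. Once that is in hand, the contradiction is immediate, since any orbit point $y_n$ automatically lies in the full space $X$, which coincides with $F^n(x_0)$, so the required separation $>\delta$ from $F^n(x_0)$ is impossible.
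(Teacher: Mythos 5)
Your proof is correct and follows essentially the same route as the paper's: both rest on the observation that $F(x_0)=X$ forces $F^n(x_0)=X$ for all $n$, so every orbit point $y_n$ lies at distance $0$ from $F^n(x_0)$ and the required separation $>\delta$ in the strong sensitivity condition can never occur. You merely phrase it as a contradiction (and spell out the induction that the paper leaves implicit), which is a cosmetic difference only.
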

\begin{proof}
	For any $\delta>0$, Let $x\in X$ with $F(x)=X$ and $\varepsilon=\frac{1}{3}>0$. Then, for any $y\in B(x,\varepsilon)$ and any $n>0$, $d(z,F^n(x))=0$, $\forall z\in F^n(y)$. So, $F$ is not strongly sensitive.
\end{proof}
The following example shows that there exists a set-valued map such that  both $\underleftarrow{\lim} F^{-1}$ and $\underleftarrow{\lim} F$ are sensitive via $\sigma$ but $F$ is not strongly sensitive.
\begin{ex}\label{exa2}
	Consider the set-valued map $F(x)=\{0,1\}$, $x\in\{0,1\}$.
	\begin{enumerate}
		\item[(1)] It is easy to see $\underleftarrow{\lim} F^{-1}=\underleftarrow{\lim} F=\{(x_0,x_1,x_2,\cdots):x_i\in\{0,1\},\forall i\geq 0\}$. So,  both $\underleftarrow{\lim} F^{-1}$ and $\underleftarrow{\lim} F$ are sensitive via $\sigma$.
		\item[(2)] By Proposition \ref{prop2}, $F$ is not strongly sensitive.
	\end{enumerate}
\end{ex}
\begin{remark}
	This example also shows Devaney chaos of $\underleftarrow{\lim} F^{-1}$ or $\underleftarrow{\lim} F$ via $\sigma$ can't imply strong Devaney chaos of $F$.
\end{remark}

The following example shows that there exists a set-valued map satisfying $F$ is strongly sensitive but $F^{-1}$ is not strongly sensitive. Also it shows that
\begin{enumerate}
	\item[(1)] strong sensitivity of $F$ can't imply the sensitivity of $\underleftarrow{\lim} F^{-1}$ via $\sigma$. 
	\item[(2)] strong Devaney chaos of $F$ can't imply the Devaney chaos of $\underleftarrow{\lim} F^{-1}$ via $\sigma$.
\end{enumerate}
\begin{ex}\label{exa3}
	Consider the set-valued map $F:[0,1]\rightarrow 2^{[0,1]}$ defined as \begin{equation*}
		F(x)=\begin{cases}
			\{2x,0\}, & 0\leq x\leq\frac{1}{2}, \\
			\{2-2x,0\}, & \frac{1}{2}<x\leq 1.
		\end{cases}
	\end{equation*}.
	\begin{enumerate}
		\item [(1)]Define $f:[0,1]\rightarrow [0,1]$ as
		\begin{equation*}
			f(x)=\begin{cases}
				2x, & 0\leq x\leq\frac{1}{2}, \\
				2-2x, & \frac{1}{2}<x\leq 1.
			\end{cases}
		\end{equation*}
		As we all know, $f$ is exact. Let $\delta=\frac{1}{8}$. Now we show that $\delta$ is a strongly sensitive constant for $F$.
		
		Let $x\in [0,1]$ and $\varepsilon>0$, then there exists $n>0$ such that $f^n(B(x,\varepsilon))=[0,1]$. Now we have to start splitting in the following situation.
		
		Case $1$. $f^n(x)+\delta\geq 1$. There exists $y\in B(x,\varepsilon)$ such that $f^n(y)\in(f^n(x)-2\delta,f^n(x)-\delta)$. Then, $f^n(y)>\delta$ and $f^n(y)<f^n(x)-\delta$. So, $d(f^n(y),F^n(x))>\delta.$
		
		Case $2$. $f^n(x)+\delta<1$. There exists $y\in B(x,\varepsilon)$ such that $f^n(y)\in(f^n(x)+\delta,1]$. Then, $f^n(y)>f^n(x)+\delta$. So, $d(f^n(y),F^n(x))>\delta.$
		
		By case $1$ and case $2$, $F$ is strongly sensitive.
		
		\item[(2)] %Firstly, we have the corresponding set-valued map 
	%	$F^{-1}$ is like that 
	 \begin{equation*}
		F^{-1}(x)=\begin{cases}
			[0,1], & x=0, \\
			\{\frac{x}{2},1-\frac{x}{2}\}, & 0<x\leq 1.
		\end{cases}
	\end{equation*}
 Then by Proposition \ref{prop2}, $F^{-1}$ is not strongly sensitive.
		
		\item[(3)] Let $\delta>0$, $\overleftarrow{x}=(0,0,\cdots)\in\underleftarrow{\lim} F^{-1}$ and $\varepsilon=\frac{1}{3}$. Then for any $\overleftarrow{y}\in\underleftarrow{\lim} F^{-1}$,  $\rho(\overleftarrow{x},\overleftarrow{y})<\varepsilon$ impies $y_0=0$. Hence, $y_1=y_2=\cdots =0$. This means, for any $n>0$, $\rho(\sigma^n(\overleftarrow{x}),\sigma^n(\overleftarrow{y}))=0<\delta$. So, $\underleftarrow{\lim} F^{-1}$ is not sensitive via $\sigma$.
		
		\item[(4)] It is easy to see $F$ is strongly Devaney chaotic, but  $\underleftarrow{\lim} F^{-1}$ is not Devaney chaotic via  $\sigma$.
	\end{enumerate}
\end{ex}
The following two examples show the transitivity of $F$ and $\overline{P(F)}=X$ can't imply strong sensitivity of $F$.
\begin{ex}\label{exa4}
	Consider the set-valued map $F(0)=\{0,1\}$ and $F(1)=\{0\}$. It is easy to see $F$ is transitive and $P(F)=\{0,1\}$. By Proposition \ref{prop2}, $F$ is not strongly sensitive.
\end{ex}

%\begin{exa}\label{exa6}
%	Consider the set-valued map $F(x)=[0,1]$, $x\in [0,1]$. It is easy to know $F$ is transitive and $\overline{P(F)}=[0,1]$. While, by Proposition \ref{prop2}, $F$ is not strongly sensitive.
%\end{exa}

%The following example shows the converse is also not true. It also shows the $2$-sensitivity of $F$ can't imply the $2$-sensitivity of $\sigma$ on $\underleftarrow{\lim} F^{-1}$.

\begin{ex}\label{exa8}
	Consider the set-valued map $F:[0,1]\rightarrow 2^{[0,1]}$: $F(x)=[0,1]$, $\forall x\in[0,1]$.
	\begin{enumerate}
		\item[(1)] Let $U,V$ be two nonempty open sets of $[0,1]$. Since $F(U)=[0,1]$, $F(U)\bigcap V\neq\emptyset$. So, $F$ is transitive.
		
		\item[(2)] for any $x\in[0,1]$, $x\in F(x)$. Then, $x\in P(F)$. So, $\overline{P(F)}=X$.
		
		\item[(3)] For any $\delta>0$, let $x=0$ and $\varepsilon=\frac{1}{2}$. Then for any $y\in B(x,\varepsilon)$, any $n>0$ and any $y_n\in F^n(y)$, $y_n\in[0,1]$. Hence, $d(y_n,F^n(x))=0$. So, $F$ is not  strongly sensitive.
	\end{enumerate}
\end{ex}

\section{Sensitivity and Devaney chaos}
As shown in the previous section, the relationships between a set-valued map and its inverse limit about neither strong sensitivity nor strong Devaney chaos is valid. Transitivity of $F$ and $\overline{P(F)}=X$ can't imply strong sensitivity of $F$. In thist section, we will give the sencond kind of sensitivity and Devaney chaos for $F$. Then we can get some different conslusions.
\begin{definition}\label{defin5}
	Let $(X, F)$ be a set-valued dynamical system. We say $F$ is \emph{sensitive}, if there exists $\delta>0$ (sensitive constant) such that for any $\varepsilon>0$, any $x\in X$ with its any orbit $(x_i)_{i=0}^\infty$, there is $y\in B(x,\varepsilon)$ with an orbit  $(y_i)_{i=0}^\infty$, and $n\in\mathbb{Z^+}$ satisfying $$d(x_n,y_n)>\delta.$$
\end{definition}

Note that $F$ is strongly sensitive implies its sensitivity.
The next two examples show the sensitivity of $F$ and $\underleftarrow{\lim} F$ via $\sigma$ doesn't imply each other.
\begin{ex}
	$F(x)=\{2x,3x\}$, $x\in[0,+\infty]$.
	\begin{itemize}
		\item[(1)] Let $\delta=1$, $x\in[0,+\infty]$ and $\varepsilon>0$. For any orbit of $x$ for $F$, denote it as $(x_0,x_1,x_2,\cdots,x_i,\cdots )$. Select $y_0=x_0+\varepsilon$ and its orbit $(y_0,y_1,y_2,\cdots,y_i,\cdots )$ for $F$ satisfying $\frac{y_{i+1}}{y_i}=\frac{x_{i+1}}{x_i},\forall i\geq 0$. Then for any $l>0$, $\mid y_l-x_l\mid>\varepsilon 2^l$. Hence there exists $n>0$ such that $\mid y_n-x_n\mid>\delta$. So, $F$ is sensitive.
		\item[(2)]Let $\delta>0$, $\overleftarrow{x}={0,\cdots,0,\cdots}$ and $\varepsilon=\frac{1}{3}$. Then $\overleftarrow{x}\in\underleftarrow{\lim} F$. For any $\overleftarrow{y}\in\underleftarrow{\lim} F$, if $\rho(\overleftarrow{x},\overleftarrow{y})<\varepsilon$, then $y_0=x_0=0$. Hence,  $\overleftarrow{y}={0,\cdots,0,\cdots}$. For any $n>0$, $\rho(\sigma^n(\overleftarrow{x}),\sigma^n(\overleftarrow{y}))<\delta$. So, $\underleftarrow{\lim} F$ is not sensitive via $\sigma$.
	\end{itemize}
\end{ex}

\begin{ex} Consider the set-valued map $F:[0,1]\rightarrow 2^{[0,1]}$ defined as
	\begin{equation*}
		F(x)=\begin{cases}
			\{x\}, & 0\leq x<\frac{1}{2}, \\
			[0,1], & x=\frac{1}{2},\\
			\{0,x,1\},& \frac{1}{2}<x<1,\\
			[0,1], & x=1.
		\end{cases}
	\end{equation*}
	\begin{itemize}
		\item [(1)]For any $x\in[0,1]$, $F^{-1}(x)$ is not a single point set, so $\underleftarrow{\lim} F$ is sensitive via $\sigma$.
		\item[(2)]Let $\delta>0$, $x=0$ and $\varepsilon=\frac{1}{4}$. Then for any $y\in[0,1]$ with $d(x,y)<\frac{1}{4}$ and any $n>0$, $d(x_n,y_n)<\delta$, $\forall x_n\in F^n(x)$, $\forall y_n\in F^n(y)$. So, $F$ is not sensitive.
	\end{itemize}
\end{ex}

Although the sensitivity of $F$ and the sensitivity of $\underleftarrow{\lim} F$ via  $\sigma$ doesn't imply each other, if the conditions are strengthened, implicative relations can be obtained.
\begin{theorem}
	Let $(X, F)$ be a set-valued dynamical system. Suppose that for any $x\in X$, its any orbit  $(x_i)_{i=0}^\infty$ is a periodic orbit. If $\underleftarrow{\lim} F$ is sensitive via $\sigma$, then $F$ is sensitive.
\end{theorem}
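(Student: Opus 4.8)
The plan is to prove that $\delta:=\delta'/4$ is a sensitivity constant for $F$, where $\delta'>0$ is a sensitivity constant for $(\underleftarrow{\lim}F,\sigma)$. Fix $\varepsilon>0$, a point $x\in X$, and an orbit $(x_i)_{i=0}^{\infty}$ of $x$; by hypothesis it is periodic, of some period $p$, so $x_{i+p}=x_i$ for all $i$. First I would lift $x$ into the inverse limit by reversing this periodic orbit: put $\xi_j:=x_{(-j)\bmod p}$ for $j\ge 0$. Since $x_{k+1}\in F(x_k)$ and the orbit is $p$-periodic, one checks $\xi_j\in F(\xi_{j+1})$, so $\overrightarrow{\xi}=(\xi_0,\xi_1,\dots)\in\underleftarrow{\lim}F$ and $\xi_0=x$; this is essentially Lemma \ref{lem1}, and $\overrightarrow{\xi}$ is in fact a periodic point of $\sigma$. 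Applying sensitivity of $(\underleftarrow{\lim}F,\sigma)$ to $\overrightarrow{\xi}$ and $\varepsilon$ produces $\overrightarrow{\eta}=(\eta_0,\eta_1,\dots)\in\underleftarrow{\lim}F$ with $\rho(\overrightarrow{\xi},\overrightarrow{\eta})<\varepsilon$ and some $n\in\mathbb{Z}^+$ with $\rho(\sigma^n\overrightarrow{\xi},\sigma^n\overrightarrow{\eta})>\delta'$. Since the diameter of $X$ is $1$, a geometric-series estimate on the definition of $\rho$ (if every coordinate distance $d(\xi_{n+i},\eta_{n+i})$ were $\le\delta'/4$ the total would be $\le\delta'/2<\delta'$) yields a single index $m\ge n$ with $d(\xi_m,\eta_m)>\delta'/4=\delta$; moreover $d(x,\eta_0)\le\rho(\overrightarrow{\xi},\overrightarrow{\eta})<\varepsilon$, so $y:=\eta_0\in B(x,\varepsilon)$.

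The core of the argument is to turn the sequence $\overrightarrow{\eta}$, which carries a \emph{backward} orbit of $y$, into a forward orbit of $y$ that separates from $(x_i)$. Reading the finite block in the order $(\eta_m,\eta_{m-1},\dots,\eta_1,\eta_0)$, the relation $\eta_i\in F(\eta_{i+1})$ shows this is an orbit segment of $F$ from $\eta_m$ ending at $\eta_0=y$. I would pick any orbit $(u_0=y,u_1,u_2,\dots)$ of $y$ and concatenate, obtaining $w:=(\eta_m,\eta_{m-1},\dots,\eta_0,u_1,u_2,\dots)$, which is a genuine orbit of $\eta_m$; by hypothesis $w$ is periodic, say of period $q$, so $w(t)=w(t+q)$ for all $t$, where $w(0)=\eta_m$ and $w(m+s)=u_s$ for $s\ge 1$. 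Now choose $N\in\mathbb{Z}^+$ with $N\equiv -m\pmod{\mathrm{lcm}(p,q)}$. Then $N\equiv -m\pmod p$ forces $x_N=x_{N\bmod p}=x_{(-m)\bmod p}=\xi_m$, while $m+N\equiv 0\pmod q$ forces $u_N=w(m+N)=w(0)=\eta_m$. Hence $d(x_N,u_N)=d(\xi_m,\eta_m)>\delta$, and since $(u_i)$ is an orbit of $y\in B(x,\varepsilon)$, this is exactly the divergence demanded by Definition \ref{defin5}; therefore $F$ is sensitive.

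The step I expect to be the genuine obstacle is this conversion between time directions: sensitivity of $F$ is phrased in terms of \emph{forward} orbits emanating from a point near $x$, whereas sensitivity of $\underleftarrow{\lim}F$ delivers a point $\overrightarrow{\eta}$ near $\overrightarrow{\xi}$ whose coordinates run backward, and a single time index must be produced that witnesses separation simultaneously for the forward orbit of $y$ and for the prescribed orbit of $x$. This is precisely where the standing hypothesis that every orbit is periodic is essential, and it is invoked twice: once to realize the orbit of $x$ as a point of $\underleftarrow{\lim}F$, and once to fold the forward orbit segment pulled out of $\overrightarrow{\eta}$ into a cycle, so that the Chinese-remainder choice of $N$ against the two periods $p$ and $q$ lands the orbit of $y$ on $\eta_m$ at the same moment the orbit of $x$ is at $\xi_m$. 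Dropping periodicity breaks this construction, which is consistent with the example just before the theorem, where $\underleftarrow{\lim}F$ is sensitive via $\sigma$ but $F$ is not sensitive.
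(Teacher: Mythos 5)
Your proof is correct, and it is worth noting that it is tighter than the paper's own argument, although the overall strategy (lift the given periodic orbit of $x$ backwards into $\underleftarrow{\lim}F$, apply sensitivity of $\sigma$, transfer the separation back to forward orbits) is the same. The paper's proof simply writes the nearby point supplied by sensitivity of $\sigma$ in the reversed-periodic form $(y_N,y_{N-1},\dots,y_1,y_N,\dots)$, i.e.\ it tacitly assumes that the perturbed inverse-limit point is itself a backward-periodic sequence, and it then asserts $d(x'_n,y'_n)>\delta$ at the very same index $n$, with the same constant $\delta$, without extracting a single coordinate from the $\rho$-distance or aligning the phases of the two periodic orbits. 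You avoid all three of these gaps: you take an arbitrary nearby point $\overrightarrow{\eta}$, use the geometric-series bound to isolate one coordinate $m$ with $d(\xi_m,\eta_m)>\delta'/4$ (hence the constant $\delta'/4$ rather than $\delta'$), convert the finite backward block $(\eta_m,\dots,\eta_0)$ into a genuine forward orbit of $\eta_m$ by appending any forward orbit of $y=\eta_0$ and invoking the standing hypothesis to close it into a cycle of some period $q$, and finally synchronize the two cycles by choosing $N\equiv -m\pmod{\mathrm{lcm}(p,q)}$ so that the given orbit of $x$ and your orbit of $y$ are simultaneously at $\xi_m$ and $\eta_m$. This lcm/phase-alignment step and the re-closing of the truncated backward block are exactly the points the paper glosses over (its displayed $\overleftarrow{y}$ need not have the assumed form, and even if it did, separation of the shifted sequences at backward time $n$ is not literally separation of the forward orbits at forward time $n$), so your version can be read as a rigorous completion of the paper's sketch; the only cosmetic differences are the harmless loss of a factor $4$ in the sensitivity constant and the observation, which you should keep, that $N$ can be taken in $\mathbb{Z}^+$ by choosing the multiple of $\mathrm{lcm}(p,q)$ large enough.
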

\begin{proof}
	Let $\delta$ be a sensitive constant of $\underleftarrow{\lim} F$ via  $\sigma$. Let $\varepsilon>0$, $x_0\in X$ and $$(x'_i)_{i=0}^\infty=(x_0,x_1,\cdots,x_M,x_1,\cdots,x_M,\cdots)(M\in\mathbb{Z^+})$$ be an orbit of $x_0$ for $F$. Let $$\overleftarrow{x}=(x_M,x_{M-1},\cdots,x_1,x_M,\cdots,x_1,\cdots).$$ Then there exists $$\overleftarrow{y}=(y_N,y_{N-1},\cdots,y_1,y_N,\cdots,y_1,\cdots)\in\underleftarrow{\lim} F$$ with $\rho(\overleftarrow{x},\overleftarrow{y})<\varepsilon$ and $n>0$ such that $\rho(\sigma^n(\overleftarrow{x}),\sigma^n(\overleftarrow{y}))>\delta$. Hence, there exists $n>0$, $y=y_N$ and its orbit $(y'_i)_{i=0}^\infty=(y_0,y_1,\cdots,y_N,y_1,\cdots,y_N,\cdots)$ such that $d(x_0,y)<\varepsilon$ and $d(x'_n,y'_n)>\delta$. So, $F$ is sensitive.
\end{proof}

\begin{definition}
	Let $(X, F)$ be a set-valued dynamical system. We say $F$ is Devaney chaotic, if
	\begin{enumerate}
		\item[(1)] $F$ is transitive,
		
		\item[(2)] the periodic points set of $F$ is dense in $X$, i.e., $\overline{P(F)}=X$,
		
		\item[(3)] $F$ is sensitive.
	\end{enumerate}
\end{definition}

\begin{theorem}\label{thm123}
	Let $(X, F)$ be a set-valued dynamical system. If $F$ is transitive and $\overline{P(F)}=X$, then $F$ is sensitive.
\end{theorem}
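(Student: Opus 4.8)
The plan is to transport the classical Banks--Brooks--Cairns--Davis--Stacey argument (the one recalled after the definition of Devaney chaos, which produces sensitivity from transitivity together with density of periodic points) to the set-valued setting. First I fix the sensitivity constant. Since $X$ has diameter $1>0$ it has at least two points, and using $\overline{P(F)}=X$ I produce two periodic points $q^{(1)},q^{(2)}$ carrying periodic orbits $\overleftarrow{q^{(1)}},\overleftarrow{q^{(2)}}$ whose underlying finite sets $O_1,O_2$ are disjoint; then $8\delta:=d(O_1,O_2)>0$ is the candidate constant. For every $x\in X$ the triangle inequality gives $d(x,O_1)+d(x,O_2)\ge d(O_1,O_2)=8\delta$, so at least one of them — say the one carried by a periodic orbit $\overleftarrow{q}$ of period $m$, with underlying set $O$ — satisfies $d(x,q_i)\ge 4\delta$ for all $i\ge 0$.

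Now fix $x\in X$, an arbitrary orbit $\overleftarrow{x}$ of $x$, and $0<\varepsilon<\delta$. By density choose a periodic point $p\in B(x,\varepsilon)$ with a periodic orbit $\overleftarrow{p}$ of period $k$; after replacing $k$ by $\mathrm{lcm}(k,m)$ we may assume $m\mid k$, so $p_{jk}=p$ and $q_{jk}=q$ for all $j$. Next I build an open neighbourhood $W$ of $q$ such that from any point of $W$ there is an orbit staying within $\delta$ of the reference orbit $\overleftarrow{q}$ for the first $k$ steps (obtained by nesting preimage neighbourhoods at $q_0,\dots,q_k$). Applying transitivity (Definition~\ref{defin3}) to $U=B(x,\varepsilon)$ and $V=W$ gives $n\ge 1$, a point $y_0\in B(x,\varepsilon)$ and an orbit $\overleftarrow{y}$ with $y_n\in W$; taking $N=kj$ minimal with $N\ge n$ (so $0\le N-n<k$) and continuing $\overleftarrow{y}$ from time $n$ along the tracking orbit gives $d(y_N,q_{N-n})<\delta$. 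Since $p_N=p$, $d(x,p)<\varepsilon<\delta$ and $d(x,q_{N-n})\ge 4\delta$,
$$d(p_N,y_N)\ge d(x,q_{N-n})-d(x,p)-d(q_{N-n},y_N)>4\delta-\delta-\delta=2\delta,$$
hence $d(x_N,p_N)>\delta$ or $d(x_N,y_N)>\delta$ by the triangle inequality. In either case we have exhibited a point of $B(x,\varepsilon)$ (namely $p$, resp.\ $y_0$) with an orbit ($\overleftarrow{p}$, resp.\ $\overleftarrow{y}$) and an index $N$ realising the separation demanded by Definition~\ref{defin5} with constant $\delta$; so $F$ is sensitive.

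The main obstacle is the non-uniqueness of orbits, which bites exactly in the two set-valued steps. In the single-valued case periodic orbits are forward invariant, which is what both forces two periodic points to have disjoint orbits and lets one follow a reference periodic orbit; for an upper semi-continuous set-valued $F$ neither holds automatically — in Example~\ref{exatran} every periodic orbit meets $\{0\}$, and setting up the tracking neighbourhood $W$ really wants the sets $F^{-1}$ of small balls around the points of $\overleftarrow{q}$ to be open (Proposition~\ref{prop1}), i.e.\ some lower semi-continuity, which is not assumed. I would therefore first split off the degenerate configurations (no two periodic orbits disjoint, or $F^{-1}$ badly discontinuous along every ``far'' periodic orbit) and in those cases extract sensitivity directly from the branching of orbits near $x$, again fed by transitivity; pinning this dichotomy down uniformly is the technical heart of the argument.
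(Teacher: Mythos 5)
Your main line is exactly the paper's: you run the Banks--Brooks--Cairns--Davis--Stacey scheme, choosing a periodic point $p$ with $d(x,p)<\varepsilon$, a periodic orbit $\overleftarrow{q}$ staying at distance at least $4\delta$ from $x$, a tracking neighbourhood built from preimages of small balls around $q_0,\dots,q_n$ (the paper's $U=\bigcap_{i=0}^{n}F^{-i}(B(q_i,\delta))$, open by Proposition~\ref{prop1}), transitivity to steer an orbit of some point of $B(x,\varepsilon)$ into that neighbourhood, and the same triangle inequality at a common multiple of the periods to force either $d(x_N,p_N)>\delta$ or $d(x_N,y_N)>\delta$. Your observation that the tracking step secretly requires lower semi-continuity, which the theorem as stated does not assume, is also consistent with the paper: that hypothesis is only added in the remark immediately following the proof (``we need $F$ is lower semi-continuous'').

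The genuine gap is the step you flag and then postpone. Your sensitivity constant is manufactured from two periodic points whose periodic orbits have \emph{disjoint} underlying point sets, and the whole argument needs, for every $x\in X$, a periodic orbit $\overleftarrow{q}$ with $\inf_{i} d(x,q_i)\geq 4\delta$ for one uniform $\delta$. As you yourself note, in the set-valued setting this can fail outright: for $F(0)=\{0,1\}$, $F(1)=\{0\}$ every periodic orbit passes through $0$, so no periodic orbit is bounded away from $x=0$, even though $F$ is transitive with $\overline{P(F)}=X$ (and trivially lower semi-continuous). Your proposed remedy --- a dichotomy that splits off such ``degenerate configurations'' and extracts separation from the branching of orbits --- is only announced, not carried out, so the proposal does not actually produce the required $y$ and orbit for such $x$; this is precisely ``the technical heart'' you leave open. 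For what it is worth, the paper's own proof glosses over the same point: it sets $\delta_0=\rho(\overleftarrow{p_1},\overleftarrow{p_2})$ for two distinct periodic orbits and infers that every $x$ is $d$-far from one of them, which does not follow since distinct periodic orbits may share points. So you have correctly located the weak spot of this adaptation, but you have not closed it, and without that step the proof is incomplete.
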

\begin{proof}
	For any $p_1, p_2\in P(F)$ with different periodic orbits $\overleftarrow{p_1}$ and $\overleftarrow{p_2}$, let $\delta_0=\rho(\overleftarrow{p_1},\overleftarrow{p_2})$. Then, either $d(x,\overleftarrow{p_1})<\frac{\delta_0}{2}$ or $d(x,\overleftarrow{p_2})<\frac{\delta_0}{2}$ holds. Hence, there exists $\delta_0>0$ such that for any $x\in X$ there is $p\in P(F)$ with a periodic orbit $\overleftarrow{p}$ satisfying $d(x,\overleftarrow{p})\geq\frac{\delta_0}{2}$.

	Let $\delta=\frac{\delta_0}{8}$. Now we show $\delta$ is a sensitive constant for $F$.
	
	For any given $x\in X$ and any $0<\varepsilon<\delta$. By $\overline{P(F)}=X$, there exists a periodic point $p$ with a periodic orbit $\overleftarrow{p}$ and the period $n$ such that $d(x,p)<\varepsilon$. Then there exists $q\in P(F)$ with a periodic orbit $\overleftarrow{q}$ such that $d(x,\overleftarrow{q})\geq 4\delta$. Set $$U=\bigcap_{i=0}^nF^{-i}(B(q_i,\delta)).$$ By Proposition \ref{prop1}, $U$ contains a nonempty open set of $X$. Since $F$ is transitive, there exist $y\in B(x,\varepsilon)$ and $k>0$ such that $y_k\in U$ for some $y_k\in F^k(y)$. Let $j=[\frac{k}{n}]+1$, then $k\leq jn\leq n+k.$ Hence, $0\leq jn-k\leq n$. Then, $$y_k\in F^{-(jn-k)}(B(q_{jn-k},\delta)).$$ This means, there is $x\in B(q_{jn-k},\delta)$ such that $x\in F^{jn-k}(y_k)$. Hence, there exists $y_{jn}\in F^{jn-k}(y_k)$ such that $y_{jn}\in B(q_{jn-k},\delta)$. Thus, we can get an orbit of $y$ for $F$, which is denoted by $\overleftarrow{y}$.
	
	Since $$d(x,q_{jn-k})\leq d(x,p)+d(p,q_{jn-k})\leq d(x,p)+d(p,y_{jn})+d(y_{jn},q_{jn-k}),$$ and $p_{jn}=p,$ $d(p_{jn},y_{jn})=d(p,y_{jn})\geq d(x,q_{jn-k})-d(y_{jn},q_{jn-k})-d(x,p)\geq d(x,\overleftarrow{q})-d(y_{jn},q_{jn-k})-d(x,p)\geq 4\delta-\delta-\delta=2\delta.$ For any $x_{jn}\in F^{jn}(x)$ $$d(p_{jn},x_{jn})+d(x_{jn},y_{jn})\geq d(y_{jn},p_{jn})\geq2\delta.$$ Then, either $d(p_{jn},x_{jn})>\delta$ or $d(x_{jn},y_{jn})>\delta$ holds. So, $F$ is sensitive.
\end{proof}
\begin{remark}
	For Theorem \ref{thm123}, we need $F$ is lower semi-continuous but not upper semi-continuous.
\end{remark}

\begin{theorem}\label{devaney}
	Let $(X, F)$ be a set-valued dynamical system. Suppose $F(X)=X$. If $\underleftarrow{\lim} F$ is Devaney chaotic via $\sigma$, then $F$ is Devaney chaotic.
\end{theorem}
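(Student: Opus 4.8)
The plan is to check the three conditions defining Devaney chaos of $F$ separately, each time transporting the corresponding property of $(\underleftarrow{\lim} F,\sigma)$ down to $F$ by a result already available, and using the surjectivity hypothesis $F(X)=X$ exactly where it is needed.

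First I would dispatch transitivity and density of periodic points, both of which transfer cleanly. Since $\underleftarrow{\lim} F$ is Devaney chaotic via $\sigma$, in particular $\sigma$ is transitive on $\underleftarrow{\lim} F$; because $F(X)=X$, Theorem \ref{thmtran} then gives that $F$ is transitive. Similarly, Devaney chaos of $\underleftarrow{\lim} F$ yields $\overline{P(\sigma\mid_{\underleftarrow{\lim} F})}=\underleftarrow{\lim} F$, so Theorem \ref{thmpero}(2), again invoking $F(X)=X$, gives $\overline{P(F)}=X$. Hence conditions (1) and (2) in the definition of Devaney chaos for $F$ are in hand.

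It remains to produce sensitivity of $F$. Here I would \emph{not} try to push the sensitivity of $\sigma$ down to $F$ directly: the examples in Section 5 show that sensitivity of $\underleftarrow{\lim} F$ via $\sigma$ does not in general imply sensitivity of $F$, so the full force of Devaney chaos of the inverse limit is genuinely being used. Instead, having already shown that $F$ is transitive and that $\overline{P(F)}=X$, I would apply Theorem \ref{thm123} to conclude that $F$ is sensitive, which is exactly condition (3). Assembling the three conclusions shows that $F$ is Devaney chaotic.

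I expect the sensitivity step to be the delicate one — not because it is long, but because it is obtained indirectly, recovered from transitivity and dense periodicity through Theorem \ref{thm123} rather than inherited from the inverse limit. As the remark following Theorem \ref{thm123} indicates, that argument passes through Proposition \ref{prop1} and so relies on lower semi-continuity of $F$; to be safe one should therefore carry the additional assumption that $F$ is lower semi-continuous here as well (or else supply a self-contained sensitivity argument at this point). Apart from that, the proof is purely a matter of matching hypotheses, in particular of noting that $F(X)=X$ is indeed the hypothesis required by both Theorem \ref{thmtran} and Theorem \ref{thmpero}(2).
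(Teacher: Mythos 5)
Your proposal is correct and follows essentially the same route as the paper, which likewise obtains transitivity from Theorem \ref{thmtran} and $\overline{P(F)}=X$ from Theorem \ref{thmpero}(2) under the hypothesis $F(X)=X$. In fact the paper's own proof stops there and leaves sensitivity unaddressed, so your explicit appeal to Theorem \ref{thm123} for condition (3) — together with your caveat that its argument uses Proposition \ref{prop1} and hence lower semi-continuity of $F$, a hypothesis not stated in the theorem — makes your version more complete than the published one.
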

\begin{proof}
By Theorem \ref{thmtran}, $F$ is transitive. By Theorem \ref{thmpero}, $\overline{P(F)}=X$.
\end{proof}

The following example shows that  Devaney chaos of  $F$ does not imply the Devaney chaos of $\underleftarrow{\lim} F$ via  $\sigma$.
\begin{ex}
	Consider the set-valued map $F$ from $\{0,1\}$ to $2^{\{0,1\}}$: $F(0)=\{0,1\}$ and $F(1)=\{0\}$. So, $F$ is  Devaney chaotic.
	\begin{itemize}
		\item [(1)]It is easy to see $F$ is transitive and $P(F)=\{0,1\}$. By Theorem \ref{thm123}, $F$ is Devaney chaoatic.
		\item[(2)]By Example \ref{exatran}, $\underleftarrow{\lim} F$ is not transitive. So, $\underleftarrow{\lim} F$ is not Devaney chaotic via $\sigma$.
	\end{itemize}
\end{ex}

\section*{Conflict of interest}
All authors declare no conflicts of interest in this paper.
\section*{Author Contributions Statement}
\textbf{Yingcui Zhao}: Conceptualization, Methodology, Investigation, Writing-Review \& Editing. \textbf{Lidong Wang}: Supervision, Provision of study materials. \textbf{Nan Wang}: Writing, Validation.

And all three authors reviewed the manuscript.

% ------------------------------------------------------------------------
\end{document}